\DeclareFontFamily{U}{euf}{}
\DeclareFontShape{U}{euf}{m}{n}{%
  <5><6><7><8><9>gen*eufm%
  <10><10.95><12><14.4><17.28><20.74><24.88>eufm10%
  }{}
\DeclareFontShape{U}{euf}{b}{n}{%
  <5><6><7><8><9>gen*eufb%
  <10><10.95><12><14.4><17.28><20.74><24.88>eufb10%
  }{}
\DeclareFontFamily{U}{msb}{}
\DeclareFontShape{U}{msb}{m}{n}{%
  <5><6><7><8><9>gen*msbm%
  <10><10.95><12><14.4><17.28><20.74><24.88>msbm10%
  }{}
\DeclareFontFamily{U}{msa}{}
\DeclareFontShape{U}{msa}{m}{n}{%
  <5><6><7><8><9>gen*msam%
  <10><10.95><12><14.4><17.28><20.74><24.88>msam10%
  }{}
\newcommand{\calD}{\mathcal{D}}
\newcommand{\mC}{\mathbb{C}}
\newcommand{\mN}{\mathbb{N}}
\newcommand{\mR}{\mathbb{R}}
\newcommand{\mZ}{\mathbb{Z}}
\newtheorem{theorem}{Theorem}[section]
\newtheorem{proposition}[theorem]{Proposition}
\theoremstyle{definition}
\newtheorem{definition}[theorem]{Definition}
\newtheorem{example}[theorem]{Example}
\newtheorem{remark}[theorem]{Remark}
\numberwithin{equation}{section} \frenchspacing
\begin{document}

\title[Summation from the viewpoint of distributions]
{Some new examples of summation of divergent series from the viewpoint of distributions}



\author{Su Hu}
\address{Department of Mathematics, South China University of Technology, Guangzhou, Guangdong 510640, China}
\email{mahusu@scut.edu.cn}

\author{Min-Soo Kim}
\address{Department of Mathematics Education, Kyungnam University, Changwon, Gyeongnam 51767, Republic of Korea}
\email{mskim@kyungnam.ac.kr}



\subjclass[2010]{46F12, 11B68, 11M06}
\keywords{Infinite series, distributions, Fourier series, Bernoulli numbers, Apostol--Bernoulli numbers}

\begin{abstract}
Let $\{a_{1}, a_{2},\ldots, a_{n},\ldots\}$ be a sequence of complex numbers  which has at most polynomial growth and satisfies an extra assumption.
 In this paper, inspired  by a recent work of Sasane, we give an explanation of the sum
$$a_{1}+2a_{2}+3a_{3}+\cdots+na_{n}+\cdots,$$
and more generally, for any $k\in\mathbb{N},$ the sum
$$1^{k}a_{1}+2^{k}a_{2}+3^{k}a_{3}+\cdots+n^{k}a_{n}+\cdots,$$
from the viewpoint of distributions.
As applications, we explain the following summation formulas
\begin{equation*}
\begin{aligned}
1^{k}-2^{k}+3^{k}-\cdots&=-\frac{E_{k}(0)}{2},
\\
1^{k}+2^{k}+3^{k}+\cdots&=-\frac{B_{k+1}}{k+1},
\\
\epsilon^{1}1^{k}+\epsilon^{2}2^{k}+\epsilon^{3}3^{k}+\cdots&=-\frac{B_{k+1}(\epsilon)}{k+1},
\end{aligned}
\end{equation*}
where $E_{k}(0)$, $B_{k}$ and $B_{k}(\epsilon)$ are the  Euler polynomials at 0, the Bernoulli numbers and the Apostol--Bernoulli numbers, respectively.
 \end{abstract}

\maketitle

\section{Introduction}\label{Sec.1}
In  the classical analysis, it is well known that
\begin{equation*}
N=1+2+3+\cdots=+\infty.
\end{equation*}
But in the quantum field theory, the Casimir effect indicates an absurd formula
\begin{equation}\label{(1)+}
N=1+2+3+\cdots=-\frac{1}{12}.
\end{equation}
There are several explanations of the above summation, including the Abel summation by using the power series 
(\cite[p. 54]{Stein} or \cite[Section 8.2]{Stopple}) and the analytic continuation of zeta functions (\cite[Section 8.4]{Stopple}).
Recently, Sasane \cite{Amol} gave a new explanation (\ref{(1)+}) based on the
Fourier series of periodic distributions.

His approach is as follows. Denote by the alternating series
\begin{equation}\label{(2)+}
A=1-2+3-4+\cdots.
\end{equation}
As a part of the definition of the summation method, by using a homothetic transformation which doubled the period of a periodic distribution (see \cite[Definition 2.8]{Amol}), 
Sasane obtained the following relationship between (\ref{(1)+}) and  (\ref{(2)+}):
\begin{equation}\label{(3)+} 
\begin{aligned}
A&=(1+2+3+4+\cdots)-2(2+4+6+8+\cdots)\\
&=(1+2+3+4+\cdots)-2^{2}(1+2+3+4+\cdots)\\
&=(1-2^{2})N=-3N
\end{aligned}
 \end{equation}
 (see \cite[p. 478, Eq. (2)]{Amol}).
Then he investigated the following $2\pi$-periodic distribution
$$D=e^{it}-2e^{2it}+3e^{3it}-4e^{4it}+\cdots\in\calD'(\mR),$$
which corresponds to (\ref{(2)+}), and showed that
\begin{equation*}
\begin{aligned}
D={\mathrm P\mathrm f} {\displaystyle \frac{e^{it}}{(1+e^{it})^2}}
+i\pi \displaystyle \sum_{n\in \mZ} \delta'_{(2n+1)\pi},
\end{aligned}
\end{equation*}
where
${\mathrm P\mathrm f} {\displaystyle \frac{e^{it}}{(1+e^{it})^2}}\in \calD^{\prime}(\mR)$
is the $2\pi$-periodic distribution given by
 \begin{equation}\label{(4)+}
\left\langle {\mathrm P\mathrm f} {\displaystyle \frac{e^{it}}{(1\!+\!e^{it})^2}} ,\varphi \right\rangle
=\lim_{\epsilon\searrow 0}
 \left( \int_{(-\delta,\pi-\epsilon)\cup(\pi+\epsilon,2\pi+\delta)}\! \frac{\varphi(t) e^{it}}{(1\!+\!e^{it})^2}dt \!-\!\frac{\varphi(\pi)}{\tan (\epsilon/2)}\right)
\end{equation}
for $ \varphi \in \calD(\mR)$ with support $\textrm{supp}(\varphi)\subset (-\delta,2\pi+\delta)$, where $\delta\in (0,\pi)$.

Sasane introduced the following generalized summation method
by ``evaluating a distribution at a point".

\begin{definition}[{\cite[Definition 2.4]{Amol}}]\label{Def 2.4}
\noindent
For a $2\pi$-periodic distribution $T$, let
$$
\displaystyle
\sum_{n\in \mZ} c_n(T)e^{int}=T
$$
in $\calD'(\mR)$.

\medskip

If there exists a $\sigma\in \mC$ such that for any
approximate identity $\{\varphi_m\}_{m=1}^{\infty}$, the limit
$\displaystyle \lim_{m\rightarrow \infty} \langle T, \varphi_m\rangle$
exists, and
 $\displaystyle
\lim_{m\rightarrow \infty} \langle T, \varphi_m\rangle=\sigma,
$
then we say  the series
$$
\displaystyle \sum_{n\in \mZ} c_n(T)
$$
  is summable, or the   Fourier series of $T$ is summable at
  $t=0$, and we define
$$
\sum_{n\in \mZ} c_n(T)=\sigma.
$$
\end{definition}

\begin{remark}
Recently, Estrada and Kellinsky-Gonzales \cite{EK} characterized Sasane's method in terms of the classical notion of distributional point values (essentially) introduced by Lojasiewicz long ago. This together with characterizations of  Estrada and  Vindas of (symmetric) distributional point values appears to lead to an important connection of the summability method of Sasane and Cesaro summability. For an extended explanation of the connection between Cesaro summability and periodic distributions (i.e., the so-called there Hardy-Littlewood problem for symmetric summability), see the last chapter of the book \cite{PSV}.
\end{remark}

Now let $\{\varphi_m\}_{m=1}^{\infty}$ be any approximate identity,
that is, $$\lim_{m\to\infty}\varphi_m=\delta_{0}$$
in $\calD'(\mR)$.  From (\ref{(4)+}), Sasane \cite{Amol}  proved
\begin{equation*}
\lim_{m\to\infty} \left\langle D,\varphi_m\right\rangle=\frac{1}{4},
\end{equation*}
which leads to the summability of (\ref{(2)+}) in the sense of Definition \ref{Def 2.4} and the sum is
$A=\frac{1}{4}.$ Then from (\ref{(3)+}) we have $$N=-\frac{1}{3}A=-\frac{1}{12},$$
which is the same as (\ref{(1)+}) predicated by the Casimir effect.

In this paper, we use the same summation method  from \cite{Amol} to treat a class of examples. That is, we consider the summability of the generalized series $$a_{1}+2a_{2}+3a_{3}+\cdots+na_{n}+\cdots$$ in the sense of distributions
(see Definition \ref{Def 2.4} above), where $\{a_{n}\}_{n=1}^{\infty}$ is a sequence
that grows at
most polynomially and satisfies an extra assumption which will be introduced below. Here the term ``grow at
most polynomially" means that,  for some $M>0$ and $k>0$ we have
\begin{equation}\label{(1)}
\textrm{ for all }n\in \mN,\,\,\, |a_n|\leq M(1+n)^k.
\end{equation}
Under the above condition, the Fourier series
$$D=a_{1}e^{it}+2a_{2}e^{2it}+3a_{3}e^{3it}+\cdots+na_{n}e^{nit}+\cdots$$
converges in the sense of distributions,
that is, $D\in\calD'(\mR)$. Let
\begin{equation}\label{in+A} f(z)=a_{1}z+a_{2}z^{2}+a_{3}z^{3}+\cdots+a_{n}z^{n}+\cdots\end{equation}
be the corresponding power series. From the Cauchy-Hadamard
theorem, it will converge in the domain $$D_{R}=\{z\in\mathbb{C}~|~|z|<R\},$$
where \begin{equation}\label{(2)}
\frac{1}{R}=\varlimsup_{n\to\infty}\sqrt[n]{|a_{n}|}.
\end{equation}
By the polynomial growth condition (\ref{(1)}),
we have
\begin{equation*}
\frac{1}{R}=\varlimsup_{n\to\infty}\sqrt[n]{|a_{n}|}\leq\lim_{n\to\infty}\sqrt[n]{M(1+n^k)}=1
\end{equation*}
and $R\geq 1$. Let $g(t)=f(e^{it})$ be the corresponding $2\pi$-periodic function on $\mathbb{R}$. Then we have $g^{\prime}(t)=if^{\prime}(e^{it})e^{it}.$

If $R>1$, then the power series $f(z)$ is analytic on the closed disc $\overline{D_{1}}=\{z\in\mathbb{C}~|~|z|\leq 1\}$.
We will show that  $$D=a_{1}e^{it}+2a_{2}e^{2it}+3a_{3}e^{3it}+\cdots+na_{n}e^{nit}+\cdots={\mathrm P\mathrm f} {\displaystyle\left(f^{\prime}(e^{it})e^{it}\right)},$$
 where ${\mathrm P\mathrm f} {\displaystyle\left(f^{\prime}(e^{it})e^{it}\right)}\in\calD'(\mR)$ is the $2\pi$-periodic distribution given by
\begin{equation}\label{in+1}
\left\langle {\mathrm P\mathrm f} {\displaystyle\left(f^{\prime}(e^{it})e^{it}\right)} ,\varphi \right\rangle
=
\int_{0}^{2\pi}f^{\prime}(e^{it})e^{it}\varphi(t)dt
\end{equation}
for $ \varphi \in \calD((0,2\pi))$ (see Theorem \ref{2.2} below).

If $R=1$, then $f(z)$ is analytic in the open disc  $D_{1}=\{z\in\mathbb{C}~|~|z|<1\}$.
We suppose that it can be analytic continued to some larger area in the complex plane contained $D_{1}$.
In this case, there may have some singular points on the unit circle
$C_{1}=\{z\in\mathbb{C}~|~|z|=1\}$. Here we assume that there exists only one pole $z_{0}\neq 1$
with order 1 on $C_{1}$ and $f(z)$ has the following Laurent series expansion
\begin{equation}\label{(*)}
f(z)=\frac{c_{-1}}{z-z_{0}}+c_{0}+c_{1}(z-z_{0})+c_{2}(z-z_{0})^{2}+\cdots
\end{equation}
in the annular $$D_{z_{0}}=\{z\in\mathbb{C}~|~0< |z-z_{0}|<r\}$$ and $2< r<+\infty$.
Denote by $$g(z)=c_{0}+c_{1}(z-z_{0})+c_{2}(z-z_{0})^{2}+\cdots$$
its analytic part. Then we have
$$f(z)=\frac{c_{-1}}{z-z_{0}}+g(z).$$
So $$f^{\prime}(z)=\frac{-c_{-1}}{(z-z_{0})^{2}}+g^{\prime}(z).$$
For simplification of the notations, let $d_{-1}=-c_{-1}$, we have
\begin{equation}\label{in+3} f^{\prime}(z)=\frac{d_{-1}}{(z-z_{0})^{2}}+g^{\prime}(z).\end{equation}
Denote by $z_{0}=e^{it_{0}}$ for some $t_{0}\neq 0$ (since $z_{0}\neq 1$ by our assumption).
In this case, we show that  \begin{equation}\label{defD}
\begin{aligned}
D&=a_{1}e^{it}+2a_{2}e^{2it}+3a_{3}e^{3it}+\cdots+na_{n}e^{nit}+\cdots
\\
&={\mathrm P\mathrm f} {\displaystyle\left(f^{\prime}(e^{it})e^{it}\right)}-d_{-1}i\pi e^{-it_{0}} \displaystyle \sum_{n\in \mZ} \delta'_{t_{0}+2n\pi},
\end{aligned}
\end{equation}
where ${\mathrm P\mathrm f} {\displaystyle\left(f^{\prime}(e^{it})e^{it}\right)}\in\calD'(\mR)$ is the $2\pi$-periodic distribution given by
\begin{equation}\label{in+2}
\begin{aligned}
&\quad\left\langle {\mathrm P\mathrm f} {\displaystyle\left(f^{\prime}(e^{it})e^{it}\right)} ,\varphi \right\rangle\\
&=
 \lim_{\epsilon\searrow 0}
 \left( \int_{(t_{0}-\pi,t_{0}-\epsilon)\cup(t_{0}+\epsilon,t_{0}+\pi)}f^{\prime}(e^{it})e^{it}\varphi(t)dt+\frac{d_{-1}e^{-it_{0}}}{\tan (\epsilon/2)}\varphi(t_{0})\right)
\end{aligned}
\end{equation}
for $ \varphi \in \calD(\mR)$ with support $\textrm{supp}(\varphi)\subset (t_{0}-\pi-\delta,t_{0}+\pi+\delta)$, where $\delta\in (0,\pi)$ (see Theorem \ref{3.2} below).

In both cases ($R>1$ and $R=1$), we will show the summation $$a_{1}+2a_{2}+3a_{3}+\cdots+na_{n}+\cdots=f^{\prime}(1)$$ in the sense of distributions.
More generally, for any $k\in\mathbb{N},$  we have
$$1^{k}a_{1}+2^{k}a_{2}+3^{k}a_{3}+\cdots+n^{k}a_{n}+\cdots=\frac{1}{i^{k}}\left(\frac{d}{dt}\right)^{k}f(e^{it})\bigg|_{t=0}$$
(see Theorem \ref{4.1} below).

As applications, in the last section, we shall explain  the following summation formulas
\begin{equation}\label{(1.13)}
\begin{aligned}
1^{k}-2^{k}+3^{k}-\cdots&=-\frac{E_{k}(0)}{2},
\\
1^{k}+2^{k}+3^{k}+\cdots&=-\frac{B_{k+1}}{k+1},
\\
\epsilon^{1}1^{k}+\epsilon^{2}2^{k}+\epsilon^{3}3^{k}+\cdots&=-\frac{B_{k+1}(\epsilon)}{k+1},
\end{aligned}
\end{equation}
where $E_{k}(0)$, $B_{k}$ and $B_{k}(\epsilon)$ are the  Euler polynomials at 0, the Bernoulli numbers and the Apostol--Bernoulli numbers, respectively.

\begin{remark} Recently, in an email, Christian Schubert told us that they have developed a formalism in the quantum field theory context that seems related to this work. If one takes (7.1) in \cite{Schubert1} with even $n$ and then evaluates the left-hand side in the Fourier basis (excluding the constant functions), one obtains the middle equation in (\ref{(1.13)}). Similarly, the first equation in (\ref{(1.13)}) could be obtained by switching from periodic to anti-periodic boundary conditions in the Hilbert space, see, e.g., equation (F.2) of \cite{Schubert2}. The third equation in (\ref{(1.13)}) could presumably be obtained using twisted boundary conditions.
\end{remark}

\section{$R>1$}
In this case, we first prove the following proposition which shows that ${\mathrm P\mathrm f} {\displaystyle\left(f^{\prime}(e^{it})e^{it}\right)}$
(see (\ref{in+1})) defines a distribution on the open interval $(0,2\pi)$.
\begin{proposition}\label{Proposition 2.1}
For $ \varphi \in \calD((0,2\pi)),$ define
\begin{eqnarray*}
\left\langle {\mathrm P\mathrm f} {\displaystyle\left(f^{\prime}(e^{it})e^{it}\right)} ,\varphi \right\rangle
\!\!\!\!\!&=&\!\!\!\!\!
\int_{0}^{2\pi}f^{\prime}(e^{it})e^{it}\varphi(t)dt.
\end{eqnarray*}
 Then ${\mathrm P\mathrm f} {\displaystyle\left(f^{\prime}(e^{it})e^{it}\right)}\in\calD'((0,2\pi))$.
 \end{proposition}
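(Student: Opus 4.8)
The plan is to observe that when $R>1$ the integrand is a genuinely continuous function of $t$, so the functional is simply integration against a continuous kernel and is therefore automatically a distribution of order zero; the finite-part notation ${\mathrm P\mathrm f}$ is in this case only cosmetic. I would carry this out in three short steps.

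First I would record the analytic input. Since the radius of convergence $R$ of $f$ satisfies $R>1$, the term-by-term derived series $f^{\prime}(z)=\sum_{n\geq 1} n a_{n} z^{n-1}$ has the same radius of convergence $R>1$, so $f^{\prime}$ is analytic on the closed unit disc $\overline{D_{1}}$. Consequently the map $t\mapsto f^{\prime}(e^{it})e^{it}$ is a $2\pi$-periodic continuous function on $\mR$; being continuous and periodic, it is bounded, say $|f^{\prime}(e^{it})e^{it}|\leq M^{\prime}$ for all $t$ and some constant $M^{\prime}>0$.

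Second I would check the two defining properties of a distribution. Linearity of $\varphi\mapsto\int_{0}^{2\pi}f^{\prime}(e^{it})e^{it}\varphi(t)\,dt$ is immediate from linearity of the integral, so it only remains to verify the seminorm continuity estimate. Given any compact set $K\subset(0,2\pi)$ and any test function $\varphi\in\calD(K)$, the support of $\varphi$ lies in $[0,2\pi]$, whence
$$
\left| \left\langle {\mathrm P\mathrm f} {\displaystyle\left(f^{\prime}(e^{it})e^{it}\right)} ,\varphi \right\rangle \right|
\leq \int_{0}^{2\pi} |f^{\prime}(e^{it})e^{it}|\,|\varphi(t)|\,dt
\leq 2\pi M^{\prime}\sup_{t\in\mR}|\varphi(t)|.
$$
This is exactly the continuity bound $|\langle T,\varphi\rangle|\leq C\sup|\varphi|$ with $C=2\pi M^{\prime}$ and no derivatives of $\varphi$ involved, so $T$ is a continuous linear functional on $\calD((0,2\pi))$, i.e. a distribution (of order zero), which establishes the claim.

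I do not expect a genuine obstacle here: the whole content of the $R>1$ case is that the finite-part regularization is unnecessary, since the kernel $f^{\prime}(e^{it})e^{it}$ extends to a bona fide continuous function across all of $(0,2\pi)$. The only point requiring (routine) care is the justification that differentiating the power series preserves the radius of convergence, hence analyticity on $\overline{D_{1}}$; the ${\mathrm P\mathrm f}$ notation is retained solely for uniformity with the harder $R=1$ case treated later, where a genuine pole on the unit circle forces a nontrivial regularization.
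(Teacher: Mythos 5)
Your proof is correct and follows essentially the same route as the paper: both arguments reduce to the observation that $f^{\prime}(e^{it})e^{it}$ is continuous and bounded when $R>1$, and then establish continuity of the functional via the estimate $\left|\left\langle {\mathrm P\mathrm f}\left(f^{\prime}(e^{it})e^{it}\right),\varphi\right\rangle\right|\leq 2\pi m\|\varphi\|_{\infty}$ (the paper phrases this through sequential convergence $\varphi_{n}\to 0$ in $\calD((0,2\pi))$, you through the order-zero seminorm bound, which is equivalent).
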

 \begin{proof}
 It only needs to prove the continuity. Since $f(z)$ is analytic, $f^{\prime}(e^{it})$ is continuous and bounded on $\mathbb{R}$, that is,
 there exists a constant $m>0$, such that
\begin{equation*}
|f^{\prime}(e^{it})|\leq m\quad\textrm{for}~~t\in [0,2\pi].
\end{equation*}
Let $\{\varphi_n\}_{n=1}^{\infty}$ be a sequence converges  to 0 in $\calD((0,2\pi))$, then in particular,
$\{\varphi_n\}_{n=1}^{\infty}$ converges to 0 uniformly on $(0,2\pi)$. Thus
\begin{equation*}
\begin{aligned}
\left|\left\langle {\mathrm P\mathrm f} {\displaystyle\left(f^{\prime}(e^{it})e^{it}\right)} ,\varphi_{n} \right\rangle\right|
&=\left|\int_{0}^{2\pi}f^{\prime}(e^{it})e^{it}\varphi_{n}(t)dt\right|\\
&\leq \int_{0}^{2\pi}|f^{\prime}(e^{it})||\varphi_{n}(t)|dt\\
&\leq2\pi m\|\varphi_{n}\|_{\infty}
\end{aligned}
\end{equation*}
 and
 $$\lim_{n\to\infty}\left|\left\langle {\mathrm P\mathrm f} {\displaystyle\left(f^{\prime}(e^{it})e^{it}\right)} ,\varphi_{n} \right\rangle\right|\leq\lim_{n\to\infty}2\pi m\|\varphi_{n}\|_{\infty}=0.$$
 So ${\mathrm P\mathrm f} {\displaystyle\left(f^{\prime}(e^{it})e^{it}\right)}\in\calD'((0,2\pi))$.
  \end{proof}
  Now we calculate the Fourier coefficients of the distribution $D={\mathrm P\mathrm f} {\displaystyle\left(f^{\prime}(e^{it})e^{it}\right)}$
  and show that
  \begin{theorem}\label{2.2}
  The following Fourier series expansion is valid in $\calD'(\mathbb{R}):$
  $$D={\mathrm P\mathrm f} {\displaystyle\left(f^{\prime}(e^{it})e^{it}\right)}=a_{1}e^{it}+2a_{2}e^{2it}+3a_{3}e^{3it}+\cdots+na_{n}e^{nit}+\cdots.$$
  \end{theorem}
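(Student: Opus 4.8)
The plan is to compute the Fourier coefficients of the distribution $D=\mathrm{Pf}\left(f^{\prime}(e^{it})e^{it}\right)$ directly and then to invoke the fact that a $2\pi$-periodic distribution is recovered from its Fourier series in $\calD'(\mR)$. The entire argument is enabled by the hypothesis $R>1$, which makes the relevant function smooth rather than merely distributional.

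First I would observe that because $R>1$, the power series $f(z)=\sum_{n\geq 1}a_n z^n$ is analytic on a neighbourhood of the closed unit disc $\overline{D_{1}}$, so $f^{\prime}(z)z=\sum_{n\geq 1}n a_n z^{n}$ converges uniformly on the unit circle $|z|=1$. Writing $z=e^{it}$, the function $g(t)=f^{\prime}(e^{it})e^{it}$ is therefore a smooth $2\pi$-periodic function on $\mR$ whose trigonometric series $\sum_{n\geq 1}n a_n e^{int}$ converges to it uniformly on $[0,2\pi]$. In particular, $g$ extends the integrand of (\ref{in+1}) to a continuous periodic function on all of $\mR$, so the regular distribution it defines is $2\pi$-periodic and restricts on $(0,2\pi)$ to the distribution of Proposition \ref{Proposition 2.1}.

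Next I would compute the Fourier coefficients $c_m(D)=\frac{1}{2\pi}\int_{0}^{2\pi}g(t)e^{-imt}\,dt$. Since $D$ is a regular distribution given by the continuous function $g$, these coincide with the classical Fourier coefficients of $g$. Substituting the uniformly convergent expansion $g(t)=\sum_{n\geq 1}n a_n e^{int}$ and interchanging summation with integration, which is legitimate precisely because of the uniform convergence, the orthogonality relation $\int_{0}^{2\pi}e^{i(n-m)t}\,dt=2\pi\,\delta_{nm}$ yields $c_m(D)=m a_m$ for $m\geq 1$ and $c_m(D)=0$ for $m\leq 0$.

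Finally, I would appeal to the standard reconstruction theorem for periodic distributions: every $2\pi$-periodic distribution equals the sum of its Fourier series in $\calD'(\mR)$. Applying this to $D$ gives $D=\sum_{n\in\mZ}c_n(D)e^{int}=\sum_{n\geq 1}n a_n e^{int}$, which is exactly the asserted expansion. The only genuinely delicate points are the passage from the local description on $(0,2\pi)$ to the $2\pi$-periodic distribution on $\mR$ and the interchange of sum and integral; both are fully controlled by the assumption $R>1$, which upgrades $g$ to a smooth periodic function with a uniformly convergent Fourier series. Consequently no principal-value (``$\mathrm{Pf}$'') regularization is actually needed in this case, in contrast to the more subtle situation $R=1$ treated later.
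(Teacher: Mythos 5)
Your proposal is correct, and its overall strategy coincides with the paper's: identify the $n$th Fourier coefficient of $D$ as $na_n$ and then invoke the reconstruction of a $2\pi$-periodic distribution from its Fourier series. The two arguments differ only in execution. The paper follows Sasane's formalism: it builds cutoff test functions $\varphi_\delta(t)=\rho_\delta(t)e^{-nit}$, passes to the associated circle distribution to write $c_n(D)=\frac{1}{2\pi}\langle D,\varphi_\delta\rangle$, and evaluates the resulting integral $\frac{1}{2\pi}\int_0^{2\pi}f'(e^{it})e^{it}e^{-nit}\,dt$ as the contour integral $\frac{1}{2\pi i}\oint_{C_1}f'(z)z^{-n}\,dz=\mathrm{Res}_{z=0}\left(f'(z)z^{-n}\right)=na_n$. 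You instead observe at the outset that, since $R>1$, the integrand $g(t)=f'(e^{it})e^{it}$ is a smooth $2\pi$-periodic function whose series $\sum_{n\ge 1}na_ne^{int}$ converges uniformly, so $D$ is a regular distribution; you then get $c_m(D)=ma_m$ by termwise integration and orthogonality. Your route is more elementary (no contour integration, no circle-distribution machinery) and has the additional virtue of making explicit a point the paper leaves implicit: the ``$\mathrm{Pf}$'' notation is vacuous here, and the object defined on $(0,2\pi)$ in Proposition \ref{Proposition 2.1} genuinely extends to a periodic regular distribution on all of $\mathbb{R}$, which is what the statement in $\calD'(\mathbb{R})$ requires. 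The paper's residue computation, on the other hand, is the one that generalizes to the $R=1$ case of Theorem \ref{3.2}, where the integrand is singular on the circle and the contour must be deformed; that is presumably why the authors phrase the $R>1$ case in the same language. Both proofs are complete and correct.
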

  \begin{proof}
  For each $\delta>0$, let $\rho_{\delta}\in\calD'(\mathbb{R})$ be the test function constructed
  as in \cite[p. 489]{Amol} such that  $\rho_{\delta}\big|_{(\delta,2\pi-\delta)}=1$.
  Define $\varphi_{\delta}(t)\in\calD(\mathbb{R})$ by
  $$\varphi_{\delta}(t)=\rho_{\delta}(t)e^{-nit}.$$
  So as $\delta\searrow 0,$
  $\varphi_{\delta}$ converges pointwise to 1 on $(0,2\pi)$, and to 0 on $\mathbb{R}\setminus [0,2\pi].$
 Then we have
$$
\varphi_{\delta,{ \scriptscriptstyle\textrm{circle}}}(e^{it})
=\sum_{m\in \mZ} \varphi_\delta(t-2\pi m)
=\sum_{m\in \mZ} \rho_\delta(t-2\pi m) e^{-nit}
=e^{-nit}.
$$
Hence
$$
c_n(D)
=\frac{1}{2\pi}\langle D_{\scriptscriptstyle\textrm{circle}},e^{-nit}\rangle
=
\frac{1}{2\pi}\langle D_{ \scriptscriptstyle\textrm{circle}} ,
\varphi_{\delta,{\scriptscriptstyle\textrm{circle}}} \rangle=
\frac{1}{2\pi}\langle D,\varphi_{\delta} \rangle
$$
 (see \cite[p. 490]{Amol}).
 Consider the interval $O=(-\frac{\pi}{2},\frac{3\pi}{2}).$
 Since
  \begin{equation*}
\begin{aligned}
 \langle D,\varphi_{\delta}\rangle &=\left\langle {\mathrm P\mathrm f} {\displaystyle\left(f^{\prime}(e^{it})e^{it}\right)} ,\varphi_{\delta} \right\rangle
 \\&=\lim_{\delta\searrow 0} \int_{O}f^{\prime}(e^{it})e^{it}\varphi_{\delta}(t)dt\\
&=\lim_{\delta\searrow 0} \int_{O}f^{\prime}(e^{it})e^{it}\rho_{\delta}(t)e^{-nit}dt\\
&=\int_{0}^{2\pi}f^{\prime}(e^{it})e^{it}e^{-nit}dt,
\end{aligned}
\end{equation*}
the $n$th Fourier coefficient of the distribution $D$ is given by
\begin{equation*}
\begin{aligned}
c_n(D)&=
\frac{1}{2\pi}\langle D,\varphi_{\delta} \rangle\\
&=
\frac{1}{2\pi}\int_{0}^{2\pi}f^{\prime}(e^{it})e^{it}e^{-nit}dt\\
&=\frac{1}{2\pi i}\oint_{C_{1}}f^{\prime}(z)z^{-n}dz,
\end{aligned}
\end{equation*}
where $C_{1}$ is the unit circle $\{z\in\mathbb{C}~|~|z|=1\}.$
Because $$f^{\prime}(z)=a_{1}+2a_{2}z+3a_{3}z^{2}+\cdots+na_{n}z^{n-1}+\cdots,$$
 we have
\begin{equation*}
c_n(D)=\frac{1}{2\pi i}\oint_{C_{1}}f^{\prime}(z)z^{-n}dz=\textrm{Res}_{z=0}\left(f^{\prime}(z)z^{-n}\right)=na_{n},
\end{equation*}
as desired.   \end{proof}

  \section{$R$=1}
In this case, as the previous discussion, first we also  show that ${\mathrm P\mathrm f} {\displaystyle\left(f^{\prime}(e^{it})e^{it}\right)}$
(see (\ref{in+2})) defines a distribution on the open interval $(t_{0}-\pi,t_{0}+\pi)$.
\begin{proposition}
For any $\varphi\in\calD((t_{0}-\pi,t_{0}+\pi)),$ we have
\begin{equation}
\begin{aligned}
\left\langle {\mathrm P\mathrm f} {\displaystyle\left(f^{\prime}(e^{it})e^{it}\right)} ,\varphi \right\rangle
&=d_{-1}\int_{t_{0}-\pi}^{t_{0}+\pi}\frac{(t-t_{0})^{2}}{(e^{it}-e^{it_{0}})^{2}}e^{it}\int_{0}^{1}(1-\theta)\varphi^{\prime\prime}(t_{0}+\theta(t-t_{0}))d\theta dt\\
&\quad+\int_{t_{0}-\pi}^{t_{0}+\pi}g^{\prime}(e^{it})e^{it}\varphi(t)dt,
\end{aligned}
\end{equation}
where some $\theta\in(0,1)$ and
${\mathrm P\mathrm f} {\displaystyle\left(f^{\prime}(e^{it})e^{it}\right)}\in\calD^{\prime}((t_{0}-\pi,t_{0}+\pi)).$
\end{proposition}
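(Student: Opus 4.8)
The plan is to start from the defining formula (\ref{in+2}) and exploit the decomposition (\ref{in+3}), namely $f'(z) = d_{-1}/(z-z_{0})^{2} + g'(z)$ with $z_{0}=e^{it_{0}}$, to separate the singular and regular contributions. First I would substitute this decomposition into the regularized integral in (\ref{in+2}). Since $g$ is analytic on the disc $|z-z_{0}|<r$ with $r>2$, and every point of the unit circle lies within distance $2$ of $z_{0}$, the derivative $g'(e^{it})$ is continuous and bounded on $[t_{0}-\pi,t_{0}+\pi]$; hence the regular piece $\int g'(e^{it})e^{it}\varphi(t)\,dt$ converges absolutely, requires no regularization, and yields the second term of the asserted formula directly.

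The heart of the argument is the singular piece $d_{-1}\int e^{it}/(e^{it}-e^{it_{0}})^{2}\,\varphi(t)\,dt$. Here I would invoke Taylor's theorem with integral remainder at $t_{0}$, writing $\varphi(t)=\varphi(t_{0})+\varphi'(t_{0})(t-t_{0})+(t-t_{0})^{2}\int_{0}^{1}(1-\theta)\varphi''(t_{0}+\theta(t-t_{0}))\,d\theta$, and treat the three resulting terms separately. The remainder term carries a factor $(t-t_{0})^{2}$ that cancels the double pole: the combination $(t-t_{0})^{2}/(e^{it}-e^{it_{0}})^{2}$ is bounded near $t_{0}$ (tending to $-e^{-2it_{0}}$), so the corresponding integrand is integrable, the $\epsilon$-limit may be removed, and this produces exactly the first term of the proposition.

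It then remains to show that the constant and linear Taylor terms, together with the counterterm $d_{-1}e^{-it_{0}}\varphi(t_{0})/\tan(\epsilon/2)$, contribute nothing. The key computational tool is the substitution $s=t-t_{0}$ combined with the identity $e^{is}/(e^{is}-1)^{2}=-1/(4\sin^{2}(s/2))$, which follows from $e^{is}-1=2ie^{is/2}\sin(s/2)$. For the linear term the integrand becomes (up to constants) $-s/(4\sin^{2}(s/2))$, which is odd in $s$; integrating it over the symmetric set $(-\pi,-\epsilon)\cup(\epsilon,\pi)$ gives zero for every $\epsilon$. For the constant term, using the antiderivative $\tfrac{1}{2}\cot(s/2)$ and evaluating at the endpoints $s=\pm\pi$ (where $\cot$ vanishes) and $s=\pm\epsilon$, one obtains precisely $-d_{-1}e^{-it_{0}}\varphi(t_{0})/\tan(\epsilon/2)$, cancelling the counterterm identically. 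I expect these exact cancellations---especially verifying the odd symmetry of the linear term and matching the cotangent evaluation against $1/\tan(\epsilon/2)$---to be the main obstacle, since they rest on the precise trigonometric identities rather than on soft estimates.

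Finally, the distribution property follows from the representation just derived. The first term is bounded by $C\|\varphi''\|_{\infty}$, the kernel $(t-t_{0})^{2}e^{it}/(e^{it}-e^{it_{0}})^{2}$ being bounded on $[t_{0}-\pi,t_{0}+\pi]$ and $\int_{0}^{1}(1-\theta)\,d\theta=\tfrac{1}{2}$, while the second is bounded by $C'\|\varphi\|_{\infty}$ since $g'(e^{it})$ is bounded. Hence $\lvert\langle \mathrm{Pf}(f'(e^{it})e^{it}),\varphi\rangle\rvert\le C''(\|\varphi\|_{\infty}+\|\varphi''\|_{\infty})$, so for any sequence $\varphi_{n}\to 0$ in $\calD((t_{0}-\pi,t_{0}+\pi))$ both seminorms tend to zero, the functional is continuous, and $\mathrm{Pf}(f'(e^{it})e^{it})\in\calD'((t_{0}-\pi,t_{0}+\pi))$ as claimed.
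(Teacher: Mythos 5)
Your proposal is correct and follows essentially the same route as the paper: the same splitting via $f'(z)=d_{-1}/(z-z_{0})^{2}+g'(z)$, the same Taylor expansion with integral remainder, the vanishing of the linear term by symmetry (your oddness of $s/\sin^{2}(s/2)$ after the shift $s=t-t_{0}$ is the paper's substitution $\tau=2t_{0}-t$ in disguise), the exact cancellation of the constant term against the $1/\tan(\epsilon/2)$ counterterm (the paper computes the same antiderivative in the form $1/(e^{it}-e^{it_{0}})$ rather than $\tfrac12\cot(s/2)$), and the same $\|\varphi\|_{\infty}+\|\varphi''\|_{\infty}$ bound for continuity. No gaps.
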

\begin{proof}
We extend the argument of \cite[Proposition 3.1]{Amol} to our case.
By (\ref{in+2}) and  (\ref{in+3}), we have
\begin{equation}\label{2}
\begin{aligned}
&\quad\left\langle {\mathrm P\mathrm f} {\displaystyle\left(f^{\prime}(e^{it})e^{it}\right)} ,\varphi \right\rangle\\
&=
 \lim_{\epsilon\searrow 0}
 \left( \int_{\Omega_{\epsilon}}f^{\prime}(e^{it})e^{it}\varphi(t)dt+\frac{d_{-1}e^{-it_{0}}}{\tan (\epsilon/2)}\varphi(t_{0})\right)\\
 &=d_{-1}\underbrace{\lim_{\epsilon\searrow 0}\int_{\Omega_{\epsilon}} \frac{e^{it}}{(e^{it}-e^{it_{0}})^{2}}\varphi(t)dt}_{(\textrm{I})}+\underbrace{\lim_{\epsilon\searrow 0}\int_{\Omega_{\epsilon}}g^{\prime}(e^{it})e^{it}\varphi(t)dt}_{(\textrm{II})}\\
 &\quad+\lim_{\epsilon\searrow 0}\frac{d_{-1}e^{-it_{0}}}{\tan (\epsilon/2)}\varphi(t_{0}),
   \end{aligned}
\end{equation}
where $\Omega_{\epsilon}=(t_{0}-\pi,t_{0}-\epsilon)\cup(t_{0}+\epsilon,t_{0}+\pi).$

First we calculate (I). By Taylor's formula, we have
$$
\varphi(t)= \varphi(t_0)+(t-t_{0})\varphi'(t_0)
           +(t-t_0)^2 \int_0^1 (1-\theta) \varphi''(t_0+\theta(t-t_0)) d\theta
$$
for some $\theta\in(0,1)$.
Thus \begin{equation}\label{star}
\begin{aligned}
\int_{\Omega_{\epsilon}} \frac{e^{it}}{(e^{it}-e^{it_{0}})^{2}}\varphi(t)dt&=\varphi(t_{0})\underbrace{\int_{\Omega_{\epsilon}} \frac{e^{it}}{(e^{it}-e^{it_{0}})^{2}}dt}_{\textrm{A}}\\
&\quad+\varphi^{\prime}(t_{0})\underbrace{\int_{\Omega_{\epsilon}} \frac{(t-t_{0})e^{it}}{(e^{it}-e^{it_{0}})^{2}}dt}_{\textrm{B}}\\
&\quad+\underbrace{\int_{\Omega_{\epsilon}}\frac{(t-t_{0})^{2}}{(e^{it}-e^{it_{0}})^{2}}e^{it}\int_{0}^{1}(1-\theta)\varphi^{\prime\prime}(t_{0}+\theta(t-t_{0}))d\theta dt}_{\textrm{C}}.
\end{aligned}
\end{equation}
To compute A, note that
$$\frac{d}{dt}\frac{1}{e^{it}-e^{it_{0}}}=-\frac{ie^{it}}{(e^{it}-e^{it_{0}})^{2}},$$
we have
\begin{equation}\label{(6)}
\begin{aligned}
A&=\int_{\Omega_{\epsilon}} \frac{e^{it}}{(e^{it}-e^{it_{0}})^{2}}dt\\
&=\frac{1}{i}\int_{\Omega_{\epsilon}} \frac{1}{(e^{it}-e^{it_{0}})^{2}}d(e^{it}-e^{it_0})\\
&=-\frac{1}{i}\left(\frac{1}{e^{it}-e^{it_{0}}}\bigg|_{t_{0}-\pi}^{t_{0}-\epsilon}+\frac{1}{e^{it}-e^{it_{0}}}\bigg|_{t_{0}+\epsilon}^{t_{0}+\pi}\right)\\
&=i\bigg(\frac{1}{e^{it_{0}}e^{-i\epsilon}-e^{it_{0}}}-\frac{1}{e^{it_{0}}e^{-i\pi}-e^{it_{0}}}+\frac{1}{e^{it_{0}}e^{i\pi}-e^{it_{0}}}\\
&\quad-\frac{1}{e^{it_{0}}e^{i\epsilon}-e^{it_{0}}}\bigg)\\
&=\frac{i}{e^{it_0}}\left(\frac{1}{e^{-i\epsilon}-1}-\frac{1}{e^{i\epsilon}-1}\right)\\
&=-\frac{i}{e^{it_0}}\frac{e^{i\epsilon}+1}{e^{i\epsilon}-1}\\
&=-\frac{i}{e^{it_0}}\frac{e^{i\frac{\epsilon}{2}}+e^{-i\frac{\epsilon}{2}}}{e^{i\frac{\epsilon}{2}}-e^{-i\frac{\epsilon}{2}}}\\
&=-\frac{1}{e^{it_0}}\frac{\cos(\epsilon/2)}{\sin(\epsilon/2)}\\
&=-\frac{1}{e^{it_0}}\frac{1}{\tan(\epsilon/2)}.
\end{aligned}
\end{equation}
For B, we have
\begin{equation}
\begin{aligned}
B&=\int_{\Omega_{\epsilon}} \frac{(t-t_0)e^{it}}{(e^{it}-e^{it_{0}})^{2}}dt\\
&=\int_{t_{0}-\pi}^{t_{0}-\epsilon} \frac{(t-t_0)e^{it}}{(e^{it}-e^{it_{0}})^{2}}dt+\int_{t_{0}+\epsilon}^{t_{0}+\pi} \frac{(t-t_0)e^{it}}{(e^{it}-e^{it_{0}})^{2}}dt\\
&=\int_{t_{0}-\pi}^{t_{0}-\epsilon} \frac{(t-t_0)e^{it}}{(e^{it}-e^{it_{0}})^{2}}dt+\int_{t_{0}+\epsilon}^{t_{0}+\pi} \frac{(t_0-\tau)e^{2it_0}\cdot e^{-i\tau}}{(e^{2it_0}\cdot e^{-i\tau}-e^{it_{0}})^{2}}(-1)d\tau\\
 &\quad (\textrm{letting $\tau=2t_{0}-t$})\\
&=\int_{t_{0}-\pi}^{t_{0}-\epsilon} \frac{(t-t_0)e^{it}}{(e^{it}-e^{it_{0}})^{2}}dt+\int_{t_{0}-\epsilon}^{t_{0}-\pi}\frac{(\tau-t_{0})e^{i\tau}}{(e^{i\tau}-e^{it_{0}})^{2}}d\tau\\
&=0.
\end{aligned}
\end{equation}
Then we consider the integral C.
By Taylor's formula
$$e^{it}=e^{it_{0}}+i(t-t_{0})e^{it_{0}}+O((t-t_{0})^{2}),$$
we have $$\lim_{t\to t_{0}}\left|\frac{e^{it}-e^{it_{0}}}{t-t_0}\right|=1.$$
So by the continuity, there exists a constant $m_1>0$ such that
$$\left|\frac{e^{it}-e^{it_{0}}}{t-t_0}\right|\geq m_{1}$$
for $t\in [t_{0}-\pi,t_{0}+\pi]$, thus
\begin{equation*}
\begin{aligned}
&\quad\lim_{\epsilon\searrow 0}\int_{\Omega_{\epsilon}}\frac{(t-t_{0})^{2}}{(e^{it}-e^{it_{0}})^{2}}e^{it}\int_{0}^{1}(1-\theta)\varphi^{\prime\prime}(t_{0}+\theta(t-t_{0}))d\theta dt\\
&=\int_{t_0-\pi}^{t_0+\pi}\frac{(t-t_{0})^{2}}{(e^{it}-e^{it_{0}})^{2}}e^{it}\int_{0}^{1}(1-\theta)\varphi^{\prime\prime}(t_{0}+\theta(t-t_{0}))d\theta dt.
\end{aligned}
\end{equation*}
Substituting the above computations of A, B and C  into (\ref{star}),
we have
\begin{equation}\label{bian1}
\begin{aligned}
\textrm{(I)}&=\lim_{\epsilon\searrow 0}\int_{\Omega_{\epsilon}} \frac{e^{it}}{(e^{it}-e^{it_{0}})^{2}}\varphi(t)dt\\
&=\int_{t_0-\pi}^{t_0+\pi}\frac{(t-t_{0})^{2}}{(e^{it}-e^{it_{0}})^{2}}e^{it}\int_{0}^{1}(1-\theta)\varphi^{\prime\prime}(t_{0}+\theta(t-t_{0}))d\theta dt\\&\quad-\lim_{\epsilon\searrow 0}\frac{1}{e^{it_0}}\frac{\varphi(t_{0})}{\tan(\epsilon/2)}.
\end{aligned}
\end{equation}
For (II), since $g^{\prime}(e^{it})$ is continuous on the interval $[t_{0}-\pi,t_{0}+\pi]$, we have
\begin{equation}\label{bian2}
\textrm{(II)}=\lim_{\epsilon\searrow 0}\int_{\Omega_{\epsilon}}g^{\prime}(e^{it})e^{it}\varphi(t)dt=\int_{t_{0}-\pi}^{t_{0}+\pi}g^{\prime}(e^{it})e^{it}\varphi(t)dt.
\end{equation}
Then subsititute (\ref{bian1}) and (\ref{bian2}) into  (\ref{2}) we get
\begin{equation*}
\begin{aligned}
\left\langle {\mathrm P\mathrm f} {\displaystyle\left(f^{\prime}(e^{it})e^{it}\right)} ,\varphi \right\rangle
&=d_{-1}\int_{t_{0}-\pi}^{t_{0}+\pi}\frac{(t-t_{0})^{2}}{(e^{it}-e^{it_{0}})^{2}}e^{it}\int_{0}^{1}(1-\theta)\varphi^{\prime\prime}(t_{0}+\theta(t-t_{0}))d\theta dt\\
&\quad+\int_{t_{0}-\pi}^{t_{0}+\pi}g^{\prime}(e^{it})e^{it}\varphi(t)dt.
\end{aligned}
\end{equation*}
To show  the continuity of  $\left\langle {\mathrm P\mathrm f} {\displaystyle\left(f^{\prime}(e^{it})e^{it}\right)} ,\varphi \right\rangle$,
let $\{\varphi_{n}\}_{n=1}^{\infty}$ be a sequence converges to 0 in the topology of $\calD((t_{0}-\pi,t_{0}+\pi)).$
Then, in particular, $\{\varphi_{n}\}_{n=1}^{\infty}$ and $\{\varphi_{n}^{\prime\prime}\}_{n=1}^{\infty}$ converge uniformly to 0
on  $(t_{0}-\pi,t_{0}+\pi)$. Since $g^{\prime}(e^{it})$ is continuous on the closed interval  $[t_{0}-\pi,t_{0}+\pi]$, there exists a constant
$m_2\geq 0,$
such that
\begin{equation*}
|g^{\prime}(e^{it})|\leq m_{2}\quad\textrm{for}~~t\in [t_0-\pi,t_{0}+\pi].
\end{equation*}
So
\begin{equation*}
\begin{aligned}
\left|\left\langle {\mathrm P\mathrm f} {\displaystyle\left(f^{\prime}(e^{it})e^{it}\right)} ,\varphi_{n}\right\rangle\right|
&\leq |d_{-1}|\left|\int_{t_{0}-\pi}^{t_{0}+\pi}\frac{(t-t_{0})^{2}}{(e^{it}-e^{it_{0}})^{2}}e^{it}\int_{0}^{1}(1-\theta)\varphi_{n}^{\prime\prime}(t_{0}+\theta(t-t_{0}))d\theta dt\right|\\
&\quad+\left|\int_{t_{0}-\pi}^{t_{0}+\pi}g^{\prime}(e^{it})e^{it}\varphi_{n}(t)dt\right|\\
&\leq |d_{-1}|\cdot 2\pi\cdot\frac{1}{m_{1}^{2}}\cdot||\varphi^{\prime\prime}_{n}||_{\infty}+2\pi\cdot m_{2}\cdot||\varphi_{n}||_{\infty}
\end{aligned}
\end{equation*}
and $$\lim_{n\to\infty}\left\langle {\mathrm P\mathrm f} {\displaystyle\left(f^{\prime}(e^{it})e^{it}\right)} ,\varphi_{n} \right\rangle=0.$$
In conclusion, ${\mathrm P\mathrm f} {\displaystyle\left(f^{\prime}(e^{it})e^{it}\right)}\in\calD^{\prime}((t_{0}-\pi,t_{0}+\pi)).$ \end{proof}
Now we calculate the Fourier coefficients of the distribution $$D={\mathrm P\mathrm f} {\displaystyle\left(f^{\prime}(e^{it})e^{it}\right)}-d_{-1}i\pi e^{-it_{0}} \displaystyle \sum_{n\in \mZ} \delta'_{t_{0}+2n\pi},$$ 
and we have the following result.

\begin{theorem}\label{3.2}
The following Fourier series expansion is valid in $\calD^{\prime}(\mathbb{R}):$
 $$D=a_{1}e^{it}+2a_{2}e^{2it}+3a_{3}e^{3it}+\cdots+na_{n}e^{nit}+\cdots.$$\end{theorem}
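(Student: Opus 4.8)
The plan is to compute the Fourier coefficients $c_n(D)$ and show that $c_n(D)=na_n$ for $n\ge 1$ and $c_n(D)=0$ for $n\le 0$, which is precisely the claimed expansion. I would extract the coefficients exactly as in the proof of Theorem \ref{2.2}: choose cut-off functions $\varphi_\delta\in\calD(\mathbb{R})$ supported in $(t_{0}-\pi-\delta,t_{0}+\pi+\delta)$ whose periodization on the circle equals $e^{-nit}$, so that $c_n(D)=\frac{1}{2\pi}\langle D,\varphi_\delta\rangle$ and $\varphi_\delta\to e^{-nit}$ on $(t_0-\pi,t_0+\pi)$ as $\delta\searrow 0$. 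By the decomposition (\ref{defD}) and linearity I split $\langle D,\varphi_\delta\rangle$ into the contribution of ${\mathrm P\mathrm f}\bigl(f'(e^{it})e^{it}\bigr)$ and that of $-d_{-1}i\pi e^{-it_0}\sum_{m}\delta'_{t_0+2m\pi}$. The $\delta'$-series is the easy piece: for small $\delta$ only the node $t_0$ lies in $\mathrm{supp}(\varphi_\delta)$, and since $\langle\delta'_{t_0},\varphi_\delta\rangle=-\varphi_\delta'(t_0)\to nie^{-nit_0}$, this part contributes $\frac{d_{-1}n}{2}e^{-i(n+1)t_0}$ to $c_n(D)$.

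For the ${\mathrm P\mathrm f}$-term I would substitute $\varphi_\delta$ into (\ref{in+2}), let $\delta\searrow 0$ (the interchange of the $\delta$- and $\epsilon$-limits being licensed by the continuity already established in the preceding Proposition), and pass to the contour variable $z=e^{it}$. The principal-value integral becomes $\frac{1}{i}\int_{\gamma_\epsilon}f'(z)z^{-n}dz$ over the unit circle with the short arc across $z_0$ deleted. Closing $\gamma_\epsilon$ by a small indentation $\sigma_\epsilon$ into the disc around $z_0$ and applying the residue theorem (the only enclosed singularity being $z=0$) gives
\begin{equation*}
\frac{1}{i}\int_{\gamma_\epsilon}f'(z)z^{-n}dz=2\pi\,\mathrm{Res}_{z=0}\bigl(f'(z)z^{-n}\bigr)-\frac{1}{i}\int_{\sigma_\epsilon}f'(z)z^{-n}dz,
\end{equation*}
with $\mathrm{Res}_{z=0}(f'(z)z^{-n})=na_n$ exactly as in Theorem \ref{2.2}. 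Using (\ref{in+3}), the summand $g'(z)z^{-n}$ is analytic across $z_0$ (here $r>2$ guarantees that $g$ is holomorphic on a neighbourhood of $\overline{D_1}$), so its indentation integral tends to $0$ as $\epsilon\searrow 0$; only the double pole $d_{-1}/(z-z_0)^2$ survives.

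The crux is therefore the regularized double-pole integral. Expanding $z^{-n}=z_0^{-n}-nz_0^{-n-1}(z-z_0)+O((z-z_0)^2)$ near $z_0$, one has
\begin{equation*}
\int_{\sigma_\epsilon}\frac{z^{-n}}{(z-z_0)^2}dz=z_0^{-n}\int_{\sigma_\epsilon}\frac{dz}{(z-z_0)^2}-nz_0^{-n-1}\int_{\sigma_\epsilon}\frac{dz}{z-z_0}+o(1),
\end{equation*}
and the two elementary integrals evaluate, the first by the exact antiderivative $-1/(z-z_0)$ and the second by the half-turn change of argument, to $\frac{i}{z_0\tan(\epsilon/2)}$ and $-i\pi$ respectively. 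Hence the $1/\tan(\epsilon/2)$ singularity produced by the double pole is exactly the one cancelled by the correction term $\frac{d_{-1}e^{-it_0}}{\tan(\epsilon/2)}\varphi_\delta(t_0)$ in (\ref{in+2}), leaving the finite remainder $-\frac{d_{-1}n}{2}e^{-i(n+1)t_0}$ for the ${\mathrm P\mathrm f}$-term. Adding this to the $\delta'$-contribution $\frac{d_{-1}n}{2}e^{-i(n+1)t_0}$, the two $z_0$-dependent terms cancel and $c_n(D)=na_n$, as claimed.

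I expect the two delicate points to be the orientation and branch bookkeeping that fixes the sign $-i\pi$ of the simple-pole integral, on which the final cancellation between the ${\mathrm P\mathrm f}$-term and the $\delta'$-term hinges, and the justification of the iterated limit in $\delta$ and $\epsilon$; the latter follows from the continuity estimate proved in the preceding Proposition together with the uniform convergence $\varphi_\delta\to e^{-nit}$ on the fundamental domain, exactly as in the passage used for Theorem \ref{2.2}.
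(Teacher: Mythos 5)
Your proposal is correct and follows essentially the same route as the paper: extract $c_n(D)$ via the cut-off functions $\varphi_\delta=\rho_\delta e^{-nit}$, split along the decomposition (\ref{defD}), convert the regularized integral to a contour integral over the slit unit circle closed by a small path past $z_0$ (the paper uses the chord $L_\epsilon$, with the same values $\frac{i}{z_0\tan(\epsilon/2)}$ and $-i\pi$ for the two elementary integrals), and observe the same two cancellations --- the $1/\tan(\epsilon/2)$ divergence against the correction term in (\ref{in+2}), and the finite remainder $\mp\frac{d_{-1}n}{2}e^{-i(n+1)t_0}$ against the $\delta'$-contribution --- yielding $c_n(D)=na_n$. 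No substantive differences to report.
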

 \begin{proof}
 We extend the argument of \cite[Theorem 5.1]{Amol} to our case.
 For each $\delta>0$, let
  $\rho_\delta \in \calD(\mR)$ be the test function such that
  $$
\rho_{\delta}\Big|_{(t_0-\pi+\delta,t_{0}+\pi-\delta)}=1.
$$
Thus
$$
\sum_{n\in \mZ} \rho_\delta (t+2\pi n)=1\quad (t\in \mR).
$$

Similar with the arguments in \cite[p. 489]{Amol}, here $\rho_\delta$ can be constructed as follows. 
For $\delta>0$ being small, consider any symmetric,
nonnegative test function $\varphi$ with support in $[t_{0}-\pi-\delta,t_{0}-\pi+\delta]$ 
and such that
$$
\int_{t_{0}-\pi-\delta}^{t_{0}-\pi+\delta} \varphi(t)dt=1.
$$
Define the function
$$
\Phi(t):=\int_{(-\infty,t]} \varphi(\tau) d\tau .
$$
Then it can be seen that for all $t\in \mR$, $\Phi$ satisfies
$\Phi(t_0-\pi+t)+\Phi(t_0-\pi-t)=1$.
So $\rho_\delta$ can be defined by
$$
\rho_\delta (t)=\Phi(t)\cdot \Phi(2t_{0}-t).
$$
Then we define $\varphi_\delta \in \calD(\mR)$ by
$$
\varphi_\delta (t)=\rho_\delta(t) e^{-nit}
$$
and
$$
\varphi_{\delta,{ \scriptscriptstyle\textrm{circle}}}(e^{it})
=\sum_{m\in \mZ} \varphi_\delta(t-2\pi m)
=\sum_{m\in \mZ} \rho_\delta(t-2\pi m) e^{-nit}
=e^{-nit}.
$$
Hence the $n$th Fourier coefficients of $D$ can be calculated by
$$
c_n(D)
=\frac{1}{2\pi}\langle D_{\scriptscriptstyle\textrm{circle}},e^{-nit}\rangle
=
\frac{1}{2\pi}\langle D_{ \scriptscriptstyle\textrm{circle}} ,
\varphi_{\delta,{\scriptscriptstyle\textrm{circle}}} \rangle=
\frac{1}{2\pi}\langle D,\varphi_{\delta} \rangle
$$
(see \cite[p. 490]{Amol}).

Let $$O_{\epsilon}=\left(t_{0}-\frac{3}{2}\pi,t_0-\epsilon\right)\cup\left(t_0+\epsilon,t_0+\frac{3}{2}\pi\right)$$
and we partition it as
\begin{equation*}
\begin{aligned}
O_{\epsilon}&=\underbrace{\left(t_{0}-\frac{3}{2}\pi,t_0-\frac{\pi}{2}\right]\cup\left[t_0+\frac{\pi}{2}, t_{0}+\frac{3}{2}\pi\right)}_{V}\\
&\quad\cup\underbrace{\left(t_{0}-\frac{\pi}{2},t_0-\epsilon\right)\cup\left(t_0+\epsilon,t_0+\frac{\pi}{2}\right)}_{V_\epsilon}.
\end{aligned}
\end{equation*}
Then by (\ref{in+2}) we have
\begin{equation*}
\begin{aligned}
\left\langle {\mathrm P\mathrm f} {\displaystyle\left(f^{\prime}(e^{it})e^{it}\right)} ,\varphi_{\delta} \right\rangle
&=\lim_{\epsilon\searrow 0} \left(\int_{O_\epsilon}f^{\prime}(e^{it})e^{it}\varphi_{\delta}(t)dt+\frac{d_{-1}\varphi_{\delta}(t_{0})}{e^{it_0}\tan(\epsilon/2)}\right)\\
&=\lim_{\delta\searrow 0} \int_{V}f^{\prime}(e^{it})e^{it}\rho_{\delta}(t)e^{-nit}dt\\
&\quad+\lim_{\epsilon\searrow 0} \left(\int_{V_\epsilon}f^{\prime}(e^{it})e^{it}e^{-nit}dt+\frac{d_{-1}e^{-nit_0}}{e^{it_0}\tan(\epsilon/2)}\right)\\
&= \int_{(t_0-\pi,t_0-\frac{\pi}{2}]\cup[t_0+\frac{\pi}{2},t_0+\pi)}f^{\prime}(e^{it})e^{it}e^{-nit}dt\\
&\quad+\lim_{\epsilon\searrow 0} \left(\int_{V_\epsilon}f^{\prime}(e^{it})e^{it}e^{-nit}dt+\frac{d_{-1}e^{-nit_0}}{e^{it_0}\tan(\epsilon/2)}\right)\\
&=\lim_{\epsilon\searrow 0} \left(\int_{\Omega_\epsilon}f^{\prime}(e^{it})e^{it}e^{-nit}dt+\frac{d_{-1}e^{-nit_0}}{e^{it_0}\tan(\epsilon/2)}\right),
\end{aligned}
\end{equation*}
here we recall that $\Omega_{\epsilon}=(t_{0}-\pi,t_{0}-\epsilon)\cup(t_{0}+\epsilon, t_{0}+\pi).$
Also \begin{equation*}
\left\langle \sum_{n\in \mZ} \delta'_{t_{0}+2n\pi},\varphi_{\delta}\right\rangle=-\varphi_{\delta}^{\prime}(t_0)=-(e^{-nit})^{\prime}\bigg|_{t=t_{0}}=ine^{-nit_0}.
\end{equation*}
Hence
\begin{equation}\label{fivestar}
\begin{aligned}
c_n(D)&=\frac{1}{2\pi}\langle D,\varphi_{\delta} \rangle\\
&=\frac{1}{2\pi}\left\langle{\mathrm P\mathrm f} {\displaystyle\left(f^{\prime}(e^{it})e^{it}\right)}-d_{-1}i\pi e^{-it_{0}} \displaystyle \sum_{n\in \mZ} \delta'_{t_{0}+2n\pi},\varphi_{\delta}\right\rangle\\
&=\frac{1}{2\pi}\Bigg(\lim_{\epsilon\searrow 0}\bigg(\int_{\Omega_\epsilon}f^{\prime}(e^{it})e^{it}e^{-nit}dt+\frac{d_{-1}e^{-nit_0}}{e^{it_0}\tan(\epsilon/2)}\bigg)\\
&\quad+d_{-1}n\pi e^{-(n+1)it_{0}}\Bigg).\end{aligned}
\end{equation}
In what follows, we shall calculate the integral $\frac{1}{2\pi}\int_{\Omega_\epsilon}f^{\prime}(e^{it})e^{it}e^{-nit}dt$.

Let $$C^{c}_{\epsilon}=\left\{z=e^{it}~|~t\in\Omega_{\epsilon}\right\}$$
and $L_{\epsilon}$ be the line connected the points $e^{i(t_0-\epsilon)}$ and  $e^{i(t_0+\epsilon)}.$
Let $$C_{\frac{1}{2}}=\left\{z\in\mathbb{C}~|~|z|=\frac{1}{2}\right\}.$$
Since $f^{\prime}(z)$ is analytic in the open disc $\left\{z\in\mathbb{C}~|~|z|<1\right\},$ by Cauchy's theorem, we have
\begin{equation}\label{(3)}
\begin{aligned}
\frac{1}{2\pi}\int_{\Omega_\epsilon}f^{\prime}(e^{it})e^{it}e^{-nit}dt
&=\frac{1}{2\pi i}\int_{\Omega_\epsilon}f^{\prime}(e^{it})e^{-nit}ie^{it}dt\\
&=\frac{1}{2\pi i}\int_{C^{c}_{\epsilon}}f^{\prime}(z)z^{-n}dz\\
&=\frac{1}{2\pi i}\int_{C^{c}_{\epsilon}}f^{\prime}(z)z^{-n}dz\\
&=\frac{1}{2\pi i}\int_{C^{c}_{\epsilon}+L_{\epsilon}}f^{\prime}(z)z^{-n}dz-\frac{1}{2\pi i}\int_{L_{\epsilon}}f^{\prime}(z)z^{-n}dz\\
&=\frac{1}{2\pi i}\oint_{C_{\frac{1}{2}}}f^{\prime}(z)z^{-n}dz-\frac{1}{2\pi i}\int_{L_{\epsilon}}f^{\prime}(z)z^{-n}dz
\end{aligned}
\end{equation}
(see Fig. 1).

\begin{figure}
\begin{tikzpicture}
circle (center, radius)
\draw [line width=1pt] (0,0) circle (1.5cm);
\draw [line width=1pt] (0,0) circle (3cm);
\draw [line width=1pt] (-3,0)-- (-15/13,36/13);
\draw [line width=1pt,->] (-15/13,36/13)-- (-27/13,18/13);
\draw[line width=1pt, ->] (3,0) arc (0:50:3);
\draw[line width=1pt, ->] (1.5,0) arc (0:50:1.5);
\begin{large}
\draw (-2.05,1.65) node[anchor=north west] {$L_{\epsilon}$};
\draw (2.15,2.75) node[anchor=north west] {$C^{c}_{\epsilon}$};
\draw (1.10,1.60) node[anchor=north west] {$C_{\frac{1}{2}}$};
\end{large}
\end{tikzpicture}
\caption{Path of the integral}
\end{figure}
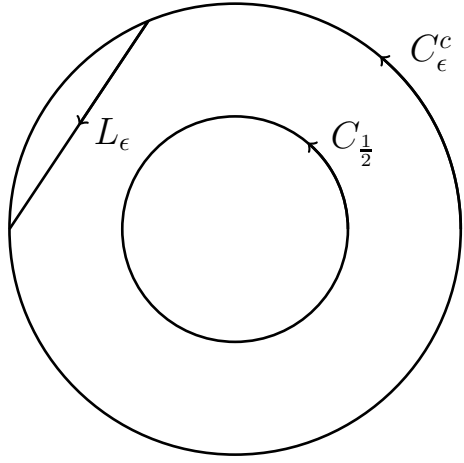

By (\ref{in+A}), we have
$$f^{\prime}(z)=a_{1}+2a_{2}z+\cdots+na_{n}z^{n-1}+\cdots$$
for $|z|<1,$ thus
$$\frac{1}{2\pi i}\oint_{C_{\frac{1}{2}}}f^{\prime}(z)z^{-n}dz=\textrm{Res}_{z=0}\left(f^{\prime}(z)z^{-n}\right)=na_{n}.$$
Substitute into (\ref{(3)}), we have
\begin{equation}\label{(4)}
\frac{1}{2\pi}\int_{\Omega_\epsilon}f^{\prime}(e^{it})e^{it}e^{-nit}dt=na_{n}-\frac{1}{2\pi i}\int_{L_{\epsilon}}f^{\prime}(z)z^{-n}dz.
\end{equation}
Now we calculate the integral $\frac{1}{2\pi i}\int_{L_{\epsilon}}f^{\prime}(z)z^{-n}dz$ in the above equation.
If $n\neq 0$, let $$z^{-n}=z_{0}^{-n}+(-n)z_{0}^{-n-1}(z-z_{0})+O((z-z_0)^{2}))$$
be the Taylor expansion of $z^{-n}$ around $z_{0}$,
then
\begin{equation}\label{(5)}
\begin{aligned}
\frac{1}{2\pi i}\int_{L_{\epsilon}}f^{\prime}(z)z^{-n}dz&=z_{0}^{-n}\frac{1}{2\pi i}\int_{L_{\epsilon}}f^{\prime}(z)dz+(-n)z_{0}^{-n-1}\frac{1}{2\pi i}\int_{L_{\epsilon}}f^{\prime}(z)(z-z_{0})dz\\
&\ll \int_{L_{\epsilon}}f^{\prime}(z)(z-z_{0})^{2}dz.
\end{aligned}
\end{equation}
Since by (\ref{in+3})
 $$f^{\prime}(z)=\frac{d_{-1}}{(z-z_{0})^{2}}+g^{\prime}(z)$$
 for  $z\in D_{z_{0}}=\{z\in\mathbb{C}~|~0< |z-z_{0}|<r\}$ and $2< r<+\infty$,
 $f^{\prime}(z)(z-z_{0})^{2}$ is analytic on the line $L_{\epsilon}$ for any $\epsilon>0$,
 we have \begin{equation*}
 \lim_{\epsilon\searrow 0}\int_{L_{\epsilon}}f^{\prime}(z)(z-z_{0})^{2}dz=0.
 \end{equation*}
So by (\ref{(5)}) we have
 \begin{equation}\label{(7)}
 \begin{aligned}
\lim_{\epsilon\searrow 0}\frac{1}{2\pi i}\int_{L_{\epsilon}}f^{\prime}(z)z^{-n}dz
&=z_{0}^{-n}\lim_{\epsilon\searrow 0}\frac{1}{2\pi i}\int_{L_{\epsilon}}f^{\prime}(z)dz \\
&\quad+(-n)z_{0}^{-n-1}\lim_{\epsilon\searrow 0}\frac{1}{2\pi i} \int_{L_{\epsilon}}f^{\prime}(z)(z-z_{0})dz.
\end{aligned}
\end{equation}
For the integral $\int_{L_{\epsilon}}f^{\prime}(z)z^{-n}dz$, by (\ref{in+3}), we have
 \begin{equation}\label{(9)}
 \int_{L_{\epsilon}}f^{\prime}(z)dz=d_{-1}\int_{L_{\epsilon}}\frac{1}{(z-z_{0})^{2}}dz+\int_{L_{\epsilon}}g^{\prime}(z)dz.
 \end{equation}
 By Cauchy's theorem,
 \begin{equation*}
 \int_{L_{\epsilon}}\frac{1}{(z-z_{0})^{2}}+ \int_{\Omega_{\epsilon}}\frac{1}{(z-z_{0})^{2}}=0,\end{equation*}
 thus
 \begin{equation}\label{(6)+}
 \begin{aligned}
  \int_{L_{\epsilon}}\frac{1}{(z-z_{0})^{2}}&=-\int_{\Omega_{\epsilon}}\frac{1}{(z-z_{0})^{2}}\\
  &=-i\int_{\Omega_{\epsilon}} \frac{e^{it}}{(e^{it}-e^{it_{0}})^{2}}dt\\
  &=\frac{i}{e^{it_0}}\frac{1}{\tan(\epsilon/2)},
  \end{aligned}
  \end{equation}
  the last equality follows from (\ref{(6)}).
 Since $g^{\prime}(z)$ is analytic on the line $L_{\epsilon}$ for any $\epsilon>0$,
 we have \begin{equation}\label{(8)}
 \lim_{\epsilon\searrow 0}\int_{L_{\epsilon}}g^{\prime}(z)dz=0.
\end{equation}
 Substitute (\ref{(6)+}) and (\ref{(8)}) into (\ref{(9)}), we get
 \begin{equation}\label{(14)}
 \lim_{\epsilon\searrow 0} \int_{L_{\epsilon}}f^{\prime}(z)dz=\lim_{\epsilon\searrow 0}d_{-1}\frac{i}{e^{it_0}}\frac{1}{\tan(\epsilon/2)}.
  \end{equation}
  Now we consider the integral $\int_{L_{\epsilon}}f^{\prime}(z)(z-z_{0})dz$ in (\ref{(9)}).
  Again by (\ref{in+3}), we have
 \begin{equation}\label{(10)}
 \int_{L_{\epsilon}}f^{\prime}(z)(z-z_{0})dz=d_{-1}\int_{L_{\epsilon}}\frac{1}{z-z_{0}}dz+\int_{L_{\epsilon}}g^{\prime}(z)(z-z_{0})dz.
 \end{equation}
 Since $L_{\epsilon}$ be the line connected the points $e^{i(t_0-\epsilon)}$ and $e^{i(t_0+\epsilon)}$ and
 $$e^{it}=\cos(t)+i\sin(t),$$
 we have the parametrized equation
 \begin{equation*}
\begin{aligned}
L_{\epsilon}:=\left\{z=x+iy~~\biggl|~~\begin{matrix}
                                 x=\cos(t_0-\epsilon)+it\sin(t_0-\epsilon) \\
                                 y=\cos(t_0+\epsilon)+it\sin(t_0-\epsilon) \\
                                 0\leq t\leq 1
                               \end{matrix} \right\},
\end{aligned}
\end{equation*}
thus
\begin{equation}\label{(10)+}
\begin{aligned}
\int_{L_{\epsilon}}\frac{1}{z-z_{0}}dz&=\ln(z-z_{0})\bigg|_{z=e^{i(t_0-\epsilon)}}^{z=e^{i(t_0+\epsilon)}}\\
&=\ln(e^{i(t_0+\epsilon)}-e^{it_0})-\ln(e^{i(t_0-\epsilon)}-e^{it_0}).
\end{aligned}
\end{equation}
Note that
\begin{equation}\label{Gai+1}
\begin{aligned}
\ln(e^{i(t_0+\epsilon)}-e^{it_0})&=\ln|e^{i(t_0+\epsilon)}-e^{it_0}|+i\arg(e^{i(t_0+\epsilon)}-e^{it_0})\\
&=\ln|e^{i\epsilon}-1|+i\arg(e^{i(t_0+\epsilon)}-e^{it_0}).
\end{aligned}
\end{equation}
Since
\begin{equation*}
e^{i(t_0+\epsilon)}-e^{it_0}=e^{it_{0}}(e^{i\epsilon}-1),
\end{equation*}
we have \begin{equation}\label{Gai+2}
\begin{aligned}
\arg(e^{i(t_0+\epsilon)})-e^{it_0})&=\arg e^{it_{0}} +\arg(e^{i\epsilon}-1)\\
&=t_{0}+\arg(e^{i\epsilon}-1).
\end{aligned}
\end{equation}
Note that \begin{equation*}
\begin{aligned}
e^{i\epsilon}-1&=(\cos\epsilon-1)+i\sin\epsilon\\
&=-2\sin^{2}(\epsilon/2)+i2\sin(\epsilon/2)\cos(\epsilon/2),
\end{aligned}
\end{equation*}
we have
\begin{equation}\label{Gai+3}
\arg(e^{i\epsilon}-1)=-\tan^{-1}\left(\frac{\cos(\epsilon/2)}{\sin(\epsilon/2)}\right)=-\left(\frac{\pi}{2}-\frac{\epsilon}{2}\right).
\end{equation}
Combing (\ref{Gai+1}), (\ref{Gai+2}) and (\ref{Gai+3}), we get
\begin{equation}\label{(3.17)}
\ln(e^{i(t_0+\epsilon)}-e^{it_0})=\ln|e^{i\epsilon}-1|-i\left(\frac{\pi}{2}-t_{0}-\frac{\epsilon}{2}\right).
\end{equation}
Similarly, we have
\begin{equation}\label{(3.17)+}
\ln(e^{i(t_0-\epsilon)}-e^{it_0})=\ln|e^{i\epsilon}-1|+i\left(\frac{\pi}{2}+t_{0}-\frac{\epsilon}{2}\right).
\end{equation}
The substitute (\ref{(3.17)}) and (\ref{(3.17)+}) into (\ref{(10)+}), we get
\begin{equation*}\int_{L_{\epsilon}}\frac{1}{z-z_{0}}dz=-i(\pi-\epsilon)
\end{equation*}
and
\begin{equation}\label{(11)}
\lim_{\epsilon\searrow 0}\int_{L_{\epsilon}}\frac{1}{z-z_{0}}dz=-i\pi.
\end{equation}
Since $g^{\prime}(z)$ is analytic on the line $L_{\epsilon}$ for any $\epsilon>0$,
we have
\begin{equation}\label{(12)}
 \lim_{\epsilon\searrow 0}\int_{L_{\epsilon}}g^{\prime}(z)(z-z_{0})dz=0.
\end{equation}
Then substitute (\ref{(11)}) and (\ref{(12)}) into (\ref{(10)}), we get
\begin{equation}\label{(13)}
\lim_{\epsilon\searrow 0}\int_{L_{\epsilon}}f^{\prime}(z)(z-z_{0})dz=-d_{-1}i\pi.
\end{equation}
So substitute  (\ref{(14)}) and (\ref{(13)})  into (\ref{(7)}), we get
\begin{equation}\label{(15)}
\begin{aligned}
&\quad\lim_{\epsilon\searrow 0}\frac{1}{2\pi i}\int_{L_{\epsilon}}f^{\prime}(z)z^{-n}dz\\&=\frac{1}{2\pi}\left(\lim_{\epsilon\searrow 0}\frac{d_{-1}e^{-nit_0}}{e^{it_0}\tan(\epsilon/2)}+d_{-1}n\pi e^{-(n+1)it_{0}}\right),
\end{aligned}
 \end{equation}
and substitute the above result into (\ref{(4)}) we get
 \begin{equation}\label{(3.22)}
 \begin{aligned}
&\quad \lim_{\epsilon\searrow 0}\frac{1}{2\pi}\int_{\Omega_\epsilon}f^{\prime}(e^{it})e^{it}e^{-nit}dt
\\&=na_{n}-\frac{1}{2\pi}\left(\lim_{\epsilon\searrow 0}\frac{d_{-1}e^{-nit_0}}{e^{it_0}\tan(\epsilon/2)}+d_{-1}n\pi e^{-(n+1)it_{0}}\right). \end{aligned}
 \end{equation}
 If $n=0$, by (\ref{(4)}) and (\ref{(14)}) we have
 \begin{equation}
\lim_{\epsilon\searrow 0}\frac{1}{2\pi}\int_{\Omega_\epsilon}f^{\prime}(e^{it})e^{it}dt=-\lim_{\epsilon\searrow 0}\frac{1}{2\pi i}\int_{L_{\epsilon}}f^{\prime}(z)dz=-\frac{1}{2\pi}\lim_{\epsilon\searrow 0}\frac{d_{-1}}{e^{it_0}}\frac{1}{\tan(\epsilon/2)},
\end{equation}
thus (\ref{(3.22)}) is also applicable in this case.
 Finally, substitute (\ref{(3.22)}) into (\ref{fivestar}), we have the $n$th Fourier coefficient
 $c_n(D)=na_{n}$ for $n\in\mathbb{N}$, as desired.
  \end{proof}

\section{The Fourier series of $D$ is summable at $t=0$}
In this section, we consider the summable of the Fourier series of $D$ at $t=0$ and prove the following theorem.
\begin{theorem}\label{4.1}
For $k\in\mathbb{N}$, let $D^{(k-1)}$ be the $(k-1)$th derivative of $D$, that is,
$$D^{(k-1)}=i^{k-1}\left(1^{k}a_{1}e^{it}+2^{k}a_{2}e^{2it}+3^{k}a_{3}e^{3it}+\cdots+n^{k}a_{n}e^{nit}+\cdots\right)\in\calD^{\prime}(\mathbb{R}).$$
Then $D^{(k-1)}$ is summable at $t=0$ in the sense of distributions (see Def. \ref{Def 2.4}) and we have the sum
$$1^{k}a_{1}+2^{k}a_{2}+3^{k}a_{3}+\cdots+n^{k}a_{n}+\cdots=\frac{1}{i^{k}}\left(\frac{d}{dt}\right)^{k}f(e^{it})\bigg|_{t=0},$$
where $f(z)$ is the corresponding power series (see (\ref{in+A})).
\end{theorem}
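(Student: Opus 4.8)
The plan is to compute $\lim_{m\to\infty}\langle D^{(k-1)},\varphi_m\rangle$ for an arbitrary approximate identity by reducing it to the evaluation of an honest smooth function at $t=0$, exploiting that an approximate identity concentrates near the origin. First I would record that $D$ agrees, on a full neighborhood $(-\eta,\eta)$ of $t=0$, with the smooth $2\pi$-periodic function $h(t):=f'(e^{it})e^{it}$. In the case $R>1$ this is immediate from Theorem \ref{2.2} together with \eqref{in+1}. In the case $R=1$ the only singular contributions to $D$ in \eqref{defD} are the principal value and the Dirac derivatives $\delta'_{t_{0}+2n\pi}$; since $z_{0}\neq 1$ forces $t_{0}\not\equiv 0\pmod{2\pi}$, none of the singular points $t_0+2n\pi$ meets a sufficiently small interval $(-\eta,\eta)$, so there $D$ coincides with $h$. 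Note that $h$ is smooth near $0$ in both cases, because $f'$ is analytic near $z=1$ (on the closed disc when $R>1$, and away from the unique pole $z_{0}\neq 1$ when $R=1$).

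Next I would pass to derivatives. Since the distributional derivative is a local operation and $D=h$ on $(-\eta,\eta)$, the distribution $D^{(k-1)}$ coincides with the continuous function $h^{(k-1)}$ on $(-\eta,\eta)$; equivalently, $D^{(k-1)}-h^{(k-1)}$ has support disjoint from $(-\eta,\eta)$. For any approximate identity $\{\varphi_m\}_{m=1}^{\infty}$, whose mass localizes at the origin, we have for all large $m$ that $\langle D^{(k-1)},\varphi_m\rangle=\int_{-\eta}^{\eta}h^{(k-1)}(t)\varphi_m(t)\,dt$, and since $h^{(k-1)}$ is continuous at $0$ this tends to $h^{(k-1)}(0)$. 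Crucially the limit $h^{(k-1)}(0)$ is independent of the chosen approximate identity, which is precisely the summability of $D^{(k-1)}$ at $t=0$ in the sense of Definition \ref{Def 2.4}.

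It then remains to identify the limit and match it with the divergent series. Writing $g(t)=f(e^{it})$ as in the introduction, the chain rule gives $g'(t)=if'(e^{it})e^{it}=ih(t)$, hence $h=\tfrac1i g'$ and $h^{(k-1)}(0)=\tfrac1i g^{(k)}(0)=\tfrac1i\left(\frac{d}{dt}\right)^k f(e^{it})\big|_{t=0}$. On the other hand the Fourier coefficients of $D^{(k-1)}$ are $c_n(D^{(k-1)})=i^{k-1}n^{k}a_n$ for $n\geq 1$ and $0$ otherwise, so Definition \ref{Def 2.4} yields
$$i^{k-1}\sum_{n=1}^{\infty}n^{k}a_n=\lim_{m\to\infty}\langle D^{(k-1)},\varphi_m\rangle=\frac1i\left(\frac{d}{dt}\right)^k f(e^{it})\bigg|_{t=0},$$
and dividing by $i^{k-1}$ gives the asserted formula $\sum_{n\geq1}n^{k}a_n=\tfrac{1}{i^{k}}\left(\frac{d}{dt}\right)^k f(e^{it})\big|_{t=0}$.

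The step I expect to be most delicate is the passage from ``$D^{(k-1)}$ coincides with the continuous function $h^{(k-1)}$ near $0$'' to ``$\lim_m\langle D^{(k-1)},\varphi_m\rangle=h^{(k-1)}(0)$ for every approximate identity.'' This is exactly where the precise definition of an approximate identity in Sasane's framework—its localization at the origin, equivalently its compatibility with the Łojasiewicz distributional point value referenced in the remark—must be invoked, so that the contributions of the (possibly singular) part of $D^{(k-1)}$ away from $0$ do not survive in the limit. This localization is the reason the answer is a well-defined number rather than a merely formal manipulation of the divergent series.
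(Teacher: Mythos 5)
Your proposal is correct and follows essentially the same route as the paper: in both arguments the key point is that near $t=0$ the distribution $D$ coincides with the smooth function $f'(e^{it})e^{it}$ (the pole $z_0=e^{it_0}\neq 1$ keeps the ${\mathrm P\mathrm f}$ singularity and the $\delta'_{t_0+2n\pi}$ terms away from the origin), so that pairing $D^{(k-1)}$ with an approximate identity reduces to evaluating $\left(\frac{d}{dt}\right)^{k-1}\left(f'(e^{it})e^{it}\right)$ at $t=0$. The only cosmetic difference is that the paper shifts the $k-1$ derivatives onto $\varphi_m$ and back, whereas you invoke locality of distributional differentiation directly; the conclusion and the identification with $\frac{1}{i^k}\left(\frac{d}{dt}\right)^k f(e^{it})\big|_{t=0}$ are identical.
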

\begin{proof}
In the case of $R>1$, the proof goes the same as \cite[Proposition 8.1]{Amol}.
So we modify this proof to suitable for $R=1$.
In this case, by our assumption, the singular point $z_{0}=e^{it_{0}}\neq 1$, that is $t_{0}\neq 0$.
Let $\{\varphi_m\}_{m=1}^{\infty}$ be any approximate
  identity. For all large enough $m$, the support of $\varphi_m$ is
  contained inside $(-\delta, \delta)$ for some small delta such that $t_{0}\not\in(-\delta,\delta)$. Then we have
\begin{eqnarray}
\nonumber
 \lim_{m\rightarrow \infty}
 \langle D^{(k-1)}, \varphi_m\rangle
 &=&
 \lim_{m\rightarrow \infty}
(-1)^{k-1} \langle D, \varphi_m^{(k-1)}\rangle
\\
\nonumber
&=&
 \lim_{m\rightarrow \infty}
(-1)^{k-1} \int_{-\delta}^{\delta}f^{\prime}(e^{it})e^{it} \varphi_m^{(k-1)}(t)dt\\
\nonumber
&\quad& (\textrm{by}~(\ref{defD})~\textrm{and~note~that}~ t_{0}\not\in(-\delta,\delta))\\
\nonumber
&=&
 \lim_{m\rightarrow \infty}
(-1)^{k-1}
\left\langle \varphi_m^{(k-1)}, f^{\prime}(e^{it})e^{it} \right\rangle
\\
\nonumber
&=&
 \lim_{m\rightarrow \infty}
\left\langle \varphi_m, \left(\frac{d}{dt}\right)^{k-1}f^{\prime}(e^{it})e^{it}\right\rangle
\\
\nonumber
&=&
\left\langle \delta_0, \left(\frac{d}{dt}\right)^{k-1}f^{\prime}(e^{it})e^{it}\right\rangle
\\
\nonumber
&=&
\left.\left(\frac{d}{dt}\right)^{k-1}f^{\prime}(e^{it})e^{it}\right|_{t=0}\\
&=&
\nonumber
\frac{1}{i}\left.\left(\frac{d}{dt}\right)^{k}f(e^{it})\right|_{t=0}.
\end{eqnarray}
This completes the proof.
\end{proof}

\section{Apllications}
In this section, as applications of our constructions, we give some examples.
\begin{example}\label{eg1}
For $k\in\mathbb{N},$ the alternating series is given by
$$A_{k}=1^{k}-2^{k}+3^{k}-\cdots+(-1)^{n-1}n^{k}+\cdots.$$
By Theorem \ref{4.1}, we consider the power series
$$f(z)=z-z^{2}+z^{3}-\cdots+(-1)^{n-1}z^{n}+\cdots$$
for $|z|<1$.
Since $$z-z^{2}+z^{3}-\cdots+(-1)^{n-1}z^{n}+\cdots=\frac{z}{z+1}$$
for $|z|<1$, $f(z)$ can be analytic continued to the complex plane as  a
meromorphic function $$f(z)=\frac{z}{z+1}$$ with a single pole $z_{0}=-1$ on the unit
circle $C_{1}$.
So by Theorem \ref{4.1}, we have
\begin{equation}\label{(17)}
\begin{aligned}
1^{k}-2^{k}+3^{k}-\cdots+(-1)^{n-1}n^{k}+\cdots&=\frac{1}{i^{k}}\left(\frac{d}{dt}\right)^{k}\frac{e^{it}}{e^{it}+1}\bigg|_{t=0}\\
&=\frac{1}{i^{k}}\left(\frac{d}{dt}\right)^{k}\left(1-\frac{1}{e^{it}+1}\right)\bigg|_{t=0}\\
&=-\frac{1}{i^{k}}\left(\frac{d}{dt}\right)^{k}\frac{1}{e^{it}+1}\bigg|_{t=0}.
\end{aligned}
\end{equation}
Recall that
Euler polynomials are defined by the generating function
\begin{equation*}
\frac{2e^{xt}}{e^{t}+1}=\sum_{m=0}^{\infty}E_{m}(x)\frac{t^m}{m!}
\end{equation*}
(see \cite{Kim2009}),
thus \begin{equation}\label{(18)}
\frac{2}{e^{t}+1}=\sum_{m=0}^{\infty}E_{m}(0)\frac{t^m}{m!}.
\end{equation}
Substitute into (\ref{(17)}), we have
\begin{equation}\label{(20)}
\begin{aligned}
A_{k}&=1^{k}-2^{k}+3^{k}-\cdots+(-1)^{n-1}n^{k}+\cdots\\
&=-\frac{1}{i^{k}}\left(\frac{d}{dt}\right)^{k}\frac{1}{e^{it}+1}\bigg|_{t=0}\\
&=-\frac{1}{2i^{k}}\left(\frac{d}{dt}\right)^{k}\frac{2}{e^{it}+1}\bigg|_{t=0}\\
&=-\frac{1}{2i^{k}}\left(\frac{d}{dt}\right)^{k}\left(\sum_{m=0}^{\infty}E_{m}(0)\frac{(it)^m}{m!}\right)\bigg|_{t=0}\\
&=-\frac{E_{k}(0)}{2}.
\end{aligned}
\end{equation}
Since $E_{1}(0)=-\frac{1}{2},$ in particular, we have
$$A=1-2+3-\cdots+(-1)^{n-1}n+\cdots=\frac{1}{4}$$
and by (\ref{(3)+})
$$N=1+2+3+\cdots+n+\cdots=-\frac{1}{3}A=-\frac{1}{12},$$
which is the same as (\ref{(1)+}) predicated by the Casimir effect.
\end{example}
\begin{example}
Furthermore, let the Bernoulli numbers $B_{m}$ be defined by the following generating function
\begin{equation}\label{(19)}
\frac{t}{e^{t}-1}=\sum_{m=0}^{\infty}B_{m}\frac{t^m}{m!}.
\end{equation}
Comparing the generating function (\ref{(18)} and (\ref{(19)}), we have a relation between $B_{k}$ and $E_{k}(0)$:
\begin{equation}\label{(20)+}
E_{k}(0)=2(1-2^{k+1})\frac{B_{k+1}}{k+1}
\end{equation}
for $k\in\mathbb{N}.$ Denote by
$$N_{k}=1^{k}+2^{k}+3^{k}-\cdots+n^{k}+\cdots.$$
It can be seen that
\begin{equation}
\begin{aligned}
A_k&=(1^k+2^k+3^k+4^k+\cdots)-2(2^k+4^k+6^k+8^k+\cdots)\\
&=(1^k+2^k+3^k+4^k+\cdots)-2^{k+1}(1^k+2^k+3^k+4^k+\cdots)\\
&=(1-2^{k+1})N_{k},
\end{aligned}
 \end{equation}
thus $$N_{k}=\frac{1}{1-2^{k+1}}A_{k}.$$
As mentioned above, this was part of the definition of the summation method from \cite{Amol}, via a homothetic transformation which doubled the period of a periodic distribution. 
Then substitute  (\ref{(20)}) to the above equality and notice  (\ref{(20)+}), we have
\begin{equation}\label{(21)}
N_{k}=\frac{1}{1-2^{k+1}}\cdot\left(-\frac{1}{2}E_{k}(0)\right)=-\frac{B_{k+1}}{k+1}.
\end{equation}
This result was first obtained by Euler in 1740 (see \cite[p. 203]{Stopple}).
\end{example}
\begin{example}
For $k\in\mathbb{N}$ and a complex number $\epsilon$ with $|\epsilon|\leq1$, but $\epsilon\neq 1$, we consider the sum
\begin{equation*}
\epsilon^{1}1^{k}+\epsilon^{2} 2^{k}+\epsilon^{3} 3^{k}+\cdots+\epsilon^{n}n^{k}+\cdots.
\end{equation*}
Then the  corresponding power series is
$$f(z)=\epsilon z+\epsilon^{2}z^{2}+\cdots+\epsilon^{n}z^{n}+\cdots$$
for $|z|<1.$
It is easy to see that $$f(z)=\sum_{n=1}^{\infty}\epsilon^{n}z^{n}=\frac{1}{1-\epsilon z}-1$$
for $|z|<1$, thus it satisfies the conditions of Section \ref{Sec.1}.
Then by Theorem \ref{4.1}, we have
\begin{equation}\label{(16)}
\begin{aligned}
\epsilon^{1} 1^{k}+\epsilon^{2} 2^{k}+\epsilon^{3} 3^{k}+\cdots+\epsilon^{n}n^{k}+\cdots
&=\frac{1}{i^{k}}\left(\frac{d}{dt}\right)^{k}\left(\frac{1}{1-\epsilon e^{it}}-1\right)\bigg|_{t=0}\\
&=\frac{1}{i^{k}}\left(\frac{d}{dt}\right)^{k}\left(\frac{1}{1-\epsilon e^{it}}\right)\bigg|_{t=0}.
\end{aligned}
\end{equation}
Recall that the Apostol--Bernoulli numbers $B_{m}(\epsilon)$ are
defined by the following generating function
\begin{equation}
\begin{aligned}
\frac{t}{\epsilon e^{t}-1}&=\sum_{m=0}^{\infty}B_{m}(\epsilon)\frac{t^m}{m!}\\
&=\sum_{m=1}^{\infty}B_{m}(\epsilon)\frac{t^m}{m!}\\
&\quad(\textrm{since~by~our~assumption}~\epsilon\neq 1,~\textrm{we~have}~B_{0}(\epsilon)=0)
\end{aligned}
\end{equation}
(see \cite[Eq. (3.1)]{Apostol}) or \cite[Eq. (1.3)]{HuKim2018}).
So substitute into (\ref{(16)}), we have
\begin{equation}
\begin{aligned}
&\quad\epsilon^{1}1^{k}+\epsilon^{2} 2^{k}+\epsilon^{3} 3^{k}+\cdots+\epsilon^{n}n^{k}+\cdots\\
&=\frac{1}{i^{k}}\left(\frac{d}{dt}\right)^{k}\left(\frac{1}{1-\epsilon e^{it}}\right)\bigg|_{t=0}\\
&=-\frac{1}{i^{k}}\left(\frac{d}{dt}\right)^{k}(it)^{-1}\left(\frac{it}{\epsilon e^{it}-1}\right)\bigg|_{t=0}\\
&=-\frac{1}{i^{k}}\left(\frac{d}{dt}\right)^{k}(it)^{-1}\left(\sum_{m=1}^{\infty}B_{m}(\epsilon)\frac{(it)^m}{m!}\right)\bigg|_{t=0}\\
&=-\frac{1}{i^{k}}\left(\frac{d}{dt}\right)^{k}\sum_{m=0}^{\infty}B_{m+1}(\epsilon)\frac{(it)^m}{(m+1)!}\bigg|_{t=0}\\
&=-\frac{B_{k+1}(\epsilon)}{k+1}.
\end{aligned}
\end{equation}
\end{example}

\textbf{Acknowledgements.} We would like to thank Professor Christian Schubert for his interested in this work and for his helpful comments.

 \bibliography{central}

\begin{thebibliography}{00}
\bibitem{Apostol} T.M. Apostol, \textit{On the Lerch zeta function}, Pacific J. Math. \textbf{1} (1951), 161--167.

\bibitem{Schubert1} J.P. Edwards, C.M. Mata, U. M\"uller and C. Schubert,
\textit{New techniques for worldline integration},
SIGMA Symmetry Integrability Geom. Methods Appl. \textbf{17} (2021), Paper No. 065, 19 pp.

\bibitem{EK} R. Estrada and K. Kellinsky-Gonzalez, \textit{Distributional point values and delta sequences}, Indag. Math. (N.S.) \textbf{33} (2022), no. 4, 719--735.

\bibitem{HuKim2018} S. Hu and M.-S. Kim,
\textit{Two closed forms for the Apostol–Bernoulli polynomials},
 Ramanujan J. \textbf{46} (2018), no. 1, 103--117.

\bibitem{Kim2009} M.-S. Kim, \textit{On Euler numbers, polynomials and related p-adic integrals},
J. Number Theory \textbf{129} (2009), no. 9, 2166--2179.

\bibitem{PSV} S. Pilipovi\'{c}, B. Stankovi\'{c} and  J. Vindas, 
\textit{Asymptotic behavior of generalized functions},
Series on Analysis, Applications and Computation, 5,
World Scientific Publishing Co. Pte. Ltd., Hackensack, NJ, 2012.

\bibitem{Amol} A. Sasane,
\textit{A summation method based on the Fourier series of periodic distributions and an example arising in the Casimir effect},
Indag. Math. (N.S.) \textbf{31} (2020), no. 3, 477--504.

\bibitem{Schubert2} C. Schubert, 
\textit{Perturbative quantum field theory in the string-inspired formalism},
Phys. Rep. \textbf{355} (2001), no. 2-3, 73--234.

 \bibitem{Stein} E.M. Stein and R. Shakarchi, \textit{Fourier analysis: An introduction}, Princeton Lectures in Analysis, 1, 
 Princeton University Press, Princeton, NJ, 2003. 
 
\bibitem{Stopple} J. Stopple,  \textit{A primer of analytic number theory: From Pythagoras to Riemann}, Cambridge University Press, Cambridge, 2003.

 \end{thebibliography}

\end{document}